\newtheorem{thm}{Theorem}
\newtheorem{lem}[thm]{Lemma}
\newtheorem{cor}[thm]{Corollary}
\newtheorem{conj}[thm]{Conjecture}
\newcommand{\ceil}[1]{\ensuremath{\left\lceil #1 \right\rceil}} 
\newcommand{\floor}[1]{\ensuremath{\left\lfloor #1 \right\rfloor}} 
\newcommand{\KL}{Kazhdan-Lusztig\ }
\DeclareMathOperator{\ch}{ch}
\newcommand{\grVRep}{\operatorname{grVRep}}
\newcommand{\VRep}{\operatorname{VRep}}
\DeclareMathOperator{\Ind}{Ind}
\begin{document}

\begin{center}
	{\large \bf Equivariant Kazhdan-Lusztig  polynomials of thagomizer matroids}
\end{center}


\begin{center}
Matthew H.Y. Xie\textsuperscript{a}, Philip B. Zhang\textsuperscript{b,}\footnote[1]{Corresponding author}\\[6pt]
\end{center}

\begin{center}
$^a$College of Science\\
Tianjin University of Technology, Tianjin 300384, P.R. China

$^{b}$College of Mathematical Science \\
Tianjin Normal University, Tianjin 300387, P. R. China\\[6pt]
\end{center}

\begin{center}
Email: $^{a}$\texttt{xie@mail.nankai.edu.cn}, $^{b}$\texttt{zhang@tjnu.edu.cn}
\end{center}

\noindent\textbf{Abstract.}
The equivariant Kazhdan-Lusztig polynomial of a matroid was introduced by Gedeon,  Proudfoot,  and Young.  Gedeon conjectured an explicit formula for the equivariant Kazhdan-Lusztig polynomials of thagomizer matroids with an action of symmetric groups.  In this paper, we discover a new formula for these polynomials which is related to the equivariant Kazhdan-Lusztig polynomials of uniform matroids.
Based on our new formula, we confirm Gedeon's conjecture by the Pieri rule.

\noindent {AMS Classification 2010:} 05B35, 05E05, 20C30.

\noindent {Keywords: thagomizer matroid, uniform matroid, equivariant Kazhdan-Lusztig polynomial,  Pieri rule, plethysm.} 

\section{Introduction}

Given a matroid $M$, Elias, Proudfoot, and Wakefield~\cite{elias2016kazhdan} introduced  the Kazhdan-Lusztig polynomial $P_M(t)$. 
If $M$ is equipped with an action of a finite group $W$, 
Gedeon,  Proudfoot, and Young~\cite{gedeon2017equivariant} defined the $W$-equivariant
Kazhdan-Lusztig polynomial $P_M^W (t)$, whose coefficients are  graded virtual representations of $W$ and from which $P_M(t)$ can be recovered by sending virtual representations to their dimensions. 
The equivariant Kazhdan-Lusztig polynomials  have been computed for uniform matroids~\cite{gedeon2017equivariant}  and $q$-niform matroids~\cite{proudfoot2018equivariant}, and conjectured  for thagomizer  matroids~\cite{gedeon2016thagomizer}.

The thagomizer matroid $M_n$ is isomorphic to the  graphic matroid of  the   complete tripartite graph $K_{1,1,n}$ or  the graph obtained  by adding an edge between the two distinguished vertices of bipartite graph $K_{2,n}$.
Gedeon~\cite{gedeon2016thagomizer} computed the polynomial $P_{M_n}(t)$ and presented a conjecture for the  equivariant polynomial $P_{M_n}^{S_n}(t)$, where $S_n$ is the symmetric group of order $n$.
Let $\Upsilon_n$ be  the set of partitions of $n$ of the form $(a, n-a-2i-{\eta}, 2^i, \eta)$, where $\eta \in \{0,1\}$, $i\geq 0$ and $1<a < n$. For any partition $\lambda$ of $n$, we let $V_\lambda$ denote the irreducible representation of $S_n$ indexed by $\lambda$. 
We also set \[
\kappa(\lambda) : = \left\{
\begin{array}{l l}
\lambda_1-1, & \lambda = (n-1,1),\\
\lambda_1 - \lambda_2 +1, \quad &\mbox{otherwise},
\end{array}
\right.
\] 
and
\[
\omega(\lambda) : = \left\{
\begin{array}{l l}
0, & \lambda_{\ell(\lambda)}  = 1, \\
1, \quad & \mbox{otherwise}. 

\end{array}
\right.
\]
Gedeon \cite{gedeon2016thagomizer} conjectured an explicit formula for $P_{M_n}^{S_n}(t)$. 

\begin{conj}\label{conj-main} 
For any  positive integer $n$,
\begin{align*}
P_{M_n}^{S_n}(t)= \sum_{\lambda \in \Upsilon_n} \kappa(\lambda) V_\lambda t^{\ell(\lambda)-1} (t+1)^{\omega(\lambda)} + V_{(n)} ((n-1)t+1).
\end{align*}
\end{conj}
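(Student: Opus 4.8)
The plan is to follow the two–step strategy announced in the abstract: first convert the Gedeon–Proudfoot–Young equivariant recursion into a closed \emph{new formula} expressing $P_{M_n}^{S_n}(t)$ through the already–known equivariant Kazhdan–Lusztig polynomials of uniform matroids, and then expand that formula with the Pieri rule and match it term by term with the right–hand side of Conjecture~\ref{conj-main}.

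First I would analyze the lattice of flats of $M_n$ together with its $S_n$–action. Writing the thagomizer graph with two apex vertices $u,v$, the spine edge $uv$, and spine vertices $w_1,\dots,w_n$, the proper nonempty flats split into two families: (A) a block $\{u,v\}\cup S$ together with singletons, indexed by $S\subseteq[n]$; and (B) two blocks $\{u\}\cup T$ and $\{v\}\cup T'$ with $T,T'$ disjoint, together with singletons. A direct check shows that restricting to a type–(A) flat of rank $|S|+1$ returns the smaller thagomizer matroid $M_{|S|}$, while the complementary contraction is a direct sum of copies of $U_{1,2}$ and hence has trivial equivariant polynomial; dually, restricting to a type–(B) flat is Boolean (trivial polynomial) while the contraction is the thagomizer matroid $M_{n-|T|-|T'|}$. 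The $S_n$–orbits are recorded by $|S|$ in family (A) and by the pair $(|T|,|T'|)$ in family (B), with stabilizers of the form $S_{|S|}\times S_{n-|S|}$ and $S_{|T|}\times S_{|T'|}\times S_{n-|T|-|T'|}$.

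Feeding this into the equivariant recursion and taking Frobenius characteristics yields a recursion for $\ch P_{M_n}^{S_n}(t)$ in which the self–referential thagomizer terms are weighted by the equivariant characteristic polynomials of the complementary (uniform and Boolean) pieces. The key step of Part~1 is to solve this recursion in closed form: I expect the characteristic–polynomial weights to reassemble, through the induction products $\Ind_{S_k\times S_{n-k}}^{S_n}$ and a plethystic substitution, into exactly the combination governing the equivariant recursion for uniform matroids. Comparing the two recursions then lets me eliminate the smaller thagomizer contributions and leaves a new formula of the shape $\ch P_{M_n}^{S_n}(t)=\sum_k \Ind_{S_k\times S_{n-k}}^{S_n}\!\big(\ch P^{S_k}_{U}(t)\boxtimes \mathbf 1\big)\,(\dots)$, i.e.\ an induction of uniform–matroid equivariant polynomials against trivial representations. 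This identification of the weights with the uniform recursion is the most delicate part of Part~1 and is where I would concentrate the care.

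With the new formula in hand, Part~2 is representation theory. I substitute the Gedeon–Proudfoot–Young closed form for the uniform equivariant polynomials, a manifestly $t$–graded sum of irreducibles $V_\lambda$, and expand each induction product by the Pieri rule: inducing $V_\lambda\boxtimes\mathbf 1$ from $S_k\times S_{n-k}$ gives the sum of $V_\mu$ over all $\mu$ obtained from $\lambda$ by adding a horizontal strip of size $n-k$. I then collect the coefficient of each $V_\mu\,t^j$. The main obstacle is controlling this collapse: I must show that the partitions surviving with nonzero coefficient are precisely those in $\Upsilon_n$, namely $(a,n-a-2i-\eta,2^i,\eta)$; that their multiplicity equals $\kappa(\lambda)=\lambda_1-\lambda_2+1$, with the $(n-1,1)$ exception; that $V_\lambda$ appears in exactly two consecutive $t$–degrees (producing the factor $(t+1)$) precisely when $\omega(\lambda)=1$, i.e.\ when the last part is $\neq 1$; and that the leftover trivial–isotypic contribution assembles into $V_{(n)}\big((n-1)t+1\big)$. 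Arranging the horizontal–strip conditions so that only the prescribed row/column shapes of $\Upsilon_n$ remain, together with verifying the degree shift to $t^{\ell(\lambda)-1}$ and the low–rank special cases, is the heart of the argument; once the multiplicities and $t$–powers are pinned down, Conjecture~\ref{conj-main} follows.
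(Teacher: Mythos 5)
Your overall architecture --- first a closed formula expressing $P_{M_n}^{S_n}(t)$ through the uniform polynomials $P_{U_{1,k-1}}^{S_k}(t)$, then a Pieri-rule expansion matched term by term against the conjectured formula --- is exactly the paper's, and your Part~2 coincides with the paper's Section~3: the same case split according to the last part of $\lambda$, the same multiplicity count giving $\kappa(\lambda)$, the same explanation of the factor $(t+1)$ as the appearance of $V_\lambda$ in two consecutive $t$-degrees, and the same bookkeeping for the trivial isotypic part. Your lattice-of-flats analysis of $M_n$ is also correct (type~(A) flats have thagomizer localizations and contractions that are direct sums of copies of $U_{1,2}$; type~(B) flats have Boolean localizations and contractions whose simplifications are smaller thagomizers, with the stabilizers you state). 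However, this analysis only re-derives the recursion that Gedeon had already established and that the paper simply cites as condition~(iii) of the characterization of $P_n(\mathbf{x};t)$.

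The genuine gap is in the step you yourself flag as the most delicate: you assert, with ``I expect,'' that the characteristic-polynomial weights in the thagomizer recursion reassemble into the combination governing the uniform-matroid recursion, and that comparing the two recursions then eliminates the smaller thagomizer contributions. Nothing in the proposal carries this out, and this is precisely where all of the work in establishing Theorem~\ref{thm-rep} lies. Two concrete ingredients are missing. First, the mechanism of the comparison: the paper needs the plethystic identity $v(t,u)=H(tu)/H(u)^2$ (Lemma~\ref{lem-phi}) to control the type-(A) weights (the equivariant characteristic polynomials of thagomizers), and it needs the Gedeon--Proudfoot--Young functional equation for the uniform generating function $\psi(t,u)$ (the coefficient of $x$ in their equation~(4)) in order to recognize the uniform recursion inside the thagomizer one (Lemma~\ref{lem-rho}); your sketch offers no substitute for either. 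Second, the logic of ``solving'' the recursion: the recursion alone does \emph{not} determine $P_{M_n}^{S_n}(t)$ --- uniqueness holds only jointly with the degree bound $\deg P_n(\mathbf{x};t)<(n+1)/2$ (condition~(ii)), which is what allows one to verify a candidate and invoke uniqueness, as the paper does for $R_n(\mathbf{x};t)$. Your proposal never mentions the degree condition, so even if the weight comparison were completed, the identification of your closed form with $P_{M_n}^{S_n}(t)$ would still not follow from what you have written.
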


In this paper, we shall  confirm Conjecture~\ref{conj-main}. 
To this end, we  find  a new  formula for $P_{M_n}^{S_n}(t)$ which is related to the equivariant Kazhdan-Lusztig polynomials of uniform matroids.
 Let $U_{1,n}$ be the uniform matroid of rank $n$ on $n+1$ elements, which is  isomorphic  to the graphic matroid of the cycle graph with $n+1$ vertices. 
One of the main results of this paper is as follows. 
\begin{thm}\label{thm-rep}
For any  positive integer $n$, we have
	\begin{align}\label{eq-new}
	P_{M_n}^{S_n}(t)=V_{(n)}+t \sum_{k=2}^{n}    \Ind_{S_{n-k}\times S_{k} }^{S_{n}} \left( V_{(n-k)}\otimes P_{U_{1,k-1}}^{S_k}(t) \right),
	\end{align}
	where $P_{U_{1,k-1}}^{S_k}(t)=\sum_{i=0}^{\lfloor \frac{k}{2} \rfloor -1} V_{k-2i,2^i} t^{i}$.
\end{thm}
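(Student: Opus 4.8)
The plan is to verify that the right-hand side of \eqref{eq-new} satisfies the recursive characterization of the equivariant Kazhdan-Lusztig polynomial due to Gedeon, Proudfoot, and Young, and then to invoke uniqueness. Recall that $P_{M}^{W}(t)$ is the unique polynomial with coefficients in the virtual representation ring of $W$ that has degree strictly less than $\rk M/2$ and satisfies an equivariant recursion of the shape
$$t^{\rk M}\,\overline{P_{M}^{W}}(t^{-1})=\sum_{[F]}\Ind_{W_{F}}^{W}\Bigl(\overline{\chi}_{M_{F}}^{W_{F}}(t)\otimes P_{M^{F}}^{W_{F}}(t)\Bigr),$$
the sum running over the $W$-orbits of flats $F$, with $M^{F}$ the localization, $M_{F}$ the contraction, and $\overline{\chi}^{W_{F}}$ the equivariant reduced characteristic polynomial. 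The first step is to describe the flats of $M_{n}=M(K_{1,1,n})$ and the $S_{n}$-action on them. Writing $a,b$ for the two distinguished vertices and $v_{1},\dots,v_{n}$ for the remaining ones, a proper nonempty flat either keeps $a,b$ in a common block together with some subset of $s$ of the $v_{i}$, or separates $a$ and $b$ into two blocks; in the first case the localization is the smaller thagomizer matroid $M_{s}$ and in the second it is a free matroid, while the corresponding contractions are direct sums of copies of $U_{1,2}$ together with parallel extensions thereof. Each $S_{n}$-orbit is determined by the relevant block sizes and its stabilizer is a Young subgroup $S_{n-k}\times S_{k}$, which is exactly the source of the inductions $\Ind_{S_{n-k}\times S_{k}}^{S_{n}}$ appearing in \eqref{eq-new}.

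Since equivariant Kazhdan-Lusztig polynomials of direct sums multiply and those of $U_{1,2}$ and of free matroids are trivial, the localization and contraction data can be assembled, and I would organize the computation using the Frobenius characteristic map $\ch$, under which each induction product $\Ind_{S_{n-k}\times S_{k}}^{S_{n}}(\,\cdot\,\otimes\,\cdot\,)$ becomes an ordinary product of symmetric functions. Applying $\ch$ to \eqref{eq-new} turns the target into the symmetric-function identity
$$\ch P_{M_{n}}^{S_{n}}(t)=h_{n}+t\sum_{k=2}^{n}h_{n-k}\cdot\ch P_{U_{1,k-1}}^{S_{k}}(t),\qquad \ch P_{U_{1,k-1}}^{S_{k}}(t)=\sum_{i=0}^{\lfloor k/2\rfloor-1}s_{(k-2i,2^{i})}\,t^{i},$$
so that the whole problem is recast as an identity among symmetric functions, in which the equivariant characteristic polynomials of the minors contribute explicit factors. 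This reformulation is what makes the plethystic structure, and ultimately the Pieri rule used later, visible.

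I would then prove the identity by strong induction on $n$, assuming \eqref{eq-new} for all smaller thagomizers (whose localizations appear) and using the known formula for $P_{U_{1,k-1}}^{S_{k}}(t)$ recorded in the statement. Substituting the inductive expressions and the computed characteristic polynomials into the recursion gives a symmetric-function expression for $t^{\rk M_{n}}\overline{\ch P_{M_{n}}^{S_{n}}}(t^{-1})$; I would check that the proposed right-hand side has degree below $\rk M_{n}/2$ and produces precisely this reciprocal, so that uniqueness forces equality. A convenient device is that the uniform-matroid factors $P_{U_{1,k-1}}^{S_{k}}(t)$ themselves satisfy the GPY recursion for $U_{1,k-1}$, which should allow the two recursions to be matched term by term rather than expanded from scratch.

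The main obstacle is exactly this last step. The uniform matroids $U_{1,k-1}$ do not occur as minors of $M_{n}$ at all, since a cycle is never a flat of $K_{1,1,n}$: the closure of any circuit of $K_{1,1,n}$ is a strictly larger thagomizer flat. Thus their appearance in \eqref{eq-new} is not read off from a single flat but emerges only after the inductive contributions are reorganized. Establishing that the sum over flats of $M_{n}$, weighted by the equivariant characteristic polynomials of the contractions, collapses exactly into the homogeneous-times-Schur form displayed above is therefore the technical heart of the argument, and I expect it to require a dedicated symmetric-function identity, most plausibly proved by a generating-function manipulation or by a plethystic rewriting of $\sum_{i}s_{(k-2i,2^{i})}t^{i}$.
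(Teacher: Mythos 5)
Your overall strategy---apply the Frobenius characteristic map, recast \eqref{eq-new} as a symmetric-function identity, and invoke uniqueness of the polynomial satisfying the degree bound and the defining recursion---is exactly the skeleton of the paper's proof. But your proposal stops precisely where the proof has to start: you correctly observe that the uniform matroids $U_{1,k-1}$ are not minors of $M_n$, so the right-hand side of \eqref{eq-new} cannot be matched flat-by-flat against the Gedeon--Proudfoot--Young recursion, and you then defer the needed ``dedicated symmetric-function identity'' to an unspecified generating-function or plethystic manipulation. That identity is the entire content of the theorem; without it you have a plan, not a proof. Moreover, your detour through an explicit orbit analysis of the flats of $K_{1,1,n}$ and a strong induction on $n$ duplicates work already available in closed form: Gedeon's Proposition 4.7 packages the flat-sum recursion for thagomizer matroids into a single functional equation for $\phi(t,u)=\sum_n P_n(\mathbf{x};t)u^{n+1}$, namely $\phi(t^{-1},tu)=(t-1)uH(u)v(t,u)+\frac{H(tu)^2}{H(u)^2}\phi(t,u)$, so no fresh analysis of flats and no induction on $n$ is required.

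What fills the gap in the paper is the part your proposal is missing. First, $v(t,u)=\sum_n h_n[(t-2)X]u^n$ simplifies to $H(tu)/H(u)^2$ via the plethystic addition formula $h_n[E+F]=\sum_k h_k[E]\,h_{n-k}[F]$. Second---and this is the key point---the uniform-matroid polynomials enter not through the geometry of $M_n$ at all, but through their own known generating-function identity: taking the coefficient of $x$ in Equation (4) of Gedeon--Proudfoot--Young yields a functional equation for $\psi(t,u)=\sum_{k\ge 2}Q_k(\mathbf{x};t)u^{k-1}$ expressing $\psi(t^{-1},tu)$ in terms of $\psi(t,u)$, $H(u)$, and $H(tu)$. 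Since the proposed right-hand side of \eqref{eq-new} has generating function $\rho(t,u)=uH(u)\left(1+tu\,\psi(t,u)\right)$, a two-line substitution shows that $\rho$ satisfies exactly the same functional equation as $\phi$, and uniqueness finishes the proof. In other words, the ``collapse'' you flag as the technical heart is not a combinatorial collapse of a sum over flats; it is an algebraic compatibility between two already-known functional equations, and exhibiting that compatibility is the concrete step your proposal would still have to supply.
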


Note that for any parition $\lambda$ of $k$  there holds that
\begin{align}
\dim  \Ind_{S_{n-k}\times S_{k} }^{S_{n}} \left( V_{(n-k)}\otimes  V_{\lambda}\right)&=|S_{n}:S_{n-k}\times S_{k} |  \times \dim V_{(n-k)} \times \dim V_{\lambda}\notag\\
&=\frac{n!}{{(n-k)!}{k!}}\dim V_{\lambda}=\binom{n}{k}\dim V_{\lambda}\label{ind-rep},
\end{align}
where $|S_{n}:S_{n-k}\times S_{k}|$  is the index of $S_{n-k}\times S_{k}$ in $S_{n}$ in the sense of isomorphism.
Hence, the following formula for the non-equivalent \KL polynomials which inspires this paper, can be derived  from Theorem~\ref{thm-rep}.
\begin{cor}\label{relation-t-u}
For any  positive integer $n$, we have
\begin{align}\label{connection}
P_{M_n}(t) = 1 + t \sum_{k=2}^n \binom{n}{k} P_{U_{1,k-1}}(t).
\end{align}
\end{cor}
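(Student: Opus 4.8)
The plan is to obtain~\eqref{connection} simply by specializing the equivariant identity of Theorem~\ref{thm-rep} to dimensions. Recall from the introduction that the ordinary Kazhdan--Lusztig polynomial $P_M(t)$ is recovered from its equivariant refinement $P_M^W(t)$ by replacing each graded virtual representation appearing as a coefficient by its dimension; write $\dim$ for this coefficient-wise map from graded virtual $W$-representations to polynomials in $t$. In particular $\dim P_{M_n}^{S_n}(t)=P_{M_n}(t)$ and $\dim P_{U_{1,k-1}}^{S_k}(t)=P_{U_{1,k-1}}(t)$.

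First I would apply $\dim$ to both sides of~\eqref{eq-new}. On the left-hand side this gives $P_{M_n}(t)$, while the first summand on the right produces $\dim V_{(n)}=1$. Since $\dim$ is additive and commutes with multiplication by powers of $t$, it remains to evaluate the dimension of each induced term $\Ind_{S_{n-k}\times S_{k}}^{S_{n}}\bigl(V_{(n-k)}\otimes P_{U_{1,k-1}}^{S_k}(t)\bigr)$.

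The key step is the computation~\eqref{ind-rep}, which shows that for every partition $\lambda$ of $k$,
\[
\dim \Ind_{S_{n-k}\times S_{k}}^{S_{n}}\bigl(V_{(n-k)}\otimes V_\lambda\bigr)=\binom{n}{k}\dim V_\lambda.
\]
Applying this to each irreducible constituent $V_{(k-2i,2^i)}$ of the explicit expression $P_{U_{1,k-1}}^{S_k}(t)=\sum_{i}V_{(k-2i,2^i)}t^{i}$ and summing, I get
\[
\dim \Ind_{S_{n-k}\times S_{k}}^{S_{n}}\bigl(V_{(n-k)}\otimes P_{U_{1,k-1}}^{S_k}(t)\bigr)=\binom{n}{k}\,\dim P_{U_{1,k-1}}^{S_k}(t)=\binom{n}{k}P_{U_{1,k-1}}(t).
\]
Substituting back yields $P_{M_n}(t)=1+t\sum_{k=2}^{n}\binom{n}{k}P_{U_{1,k-1}}(t)$, which is exactly~\eqref{connection}.

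There is essentially no obstacle here: all the representation-theoretic content lives in Theorem~\ref{thm-rep}, and the corollary is a routine dimension specialization. The only point requiring a word of care is that $\dim$ distributes over the induced polynomial term by term, that is, that taking dimensions commutes with induction, which is immediate from linearity of the dimension map together with the index formula used in~\eqref{ind-rep}.
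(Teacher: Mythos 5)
Your proposal is correct and follows essentially the same route as the paper: the paper also obtains Corollary~\ref{relation-t-u} by specializing Theorem~\ref{thm-rep} to dimensions, with the index computation~\eqref{ind-rep} supplying the factor $\binom{n}{k}$ for each induced term. Your write-up just makes explicit the term-by-term application of the dimension map that the paper leaves implicit.
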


This paper is organized as follows.
Section~2 is dedicated to the proof of Theorem \ref{thm-rep}. The main tools used in our proof of Theorem~\ref{thm-rep} are the  Frobenius characteristic map and  the generating functions of  symmeric functions.
In Section~3, based on Theorem~\ref{thm-rep}, we confirm  Conjecture~\ref{conj-main} by the Pieri rule. 

%


\section{Proof of Theorem~\ref{thm-rep}}
In this section, we shall prove Theorem~\ref{thm-rep}. We first review the definition of Frobenius characteristic map and then show in Theorem~\ref{thm-pq} that Theorem~\ref{thm-rep}  can be translated into a symmetric function equality.
Once Theorem~\ref{thm-pq} is proved,  the proof of Theorem~\ref{thm-rep} is done since they are equivalent under the Frobenius characteristic map .

 Following Gedeon,  Proudfoot, and  Young  \cite{proudfoot2018equivariant},  let $\VRep(S_n)$ be the $\mathbb{Z}$-module  of isomorphism classes of virtual representations of $S_n$ and set
$ \text{grVRep}(W) := \text{VRep}(W)\otimes_{\mathbb{Z}}\mathbb{Z}[t].$
Consider the Frobenius characteristic map 
$$\ch : \grVRep(S_n) {\longrightarrow} \Lambda_n\otimes_{\mathbb{Z}}\mathbb{Z}[t],$$
where $\Lambda_n$ is the $\mathbb{Z}$-module of symmetric functions  of  degree $n$ in the variables $\mathbf{x}= \left( x_1, x_2, \ldots \right)$, see \cite[Section I.7]{Macdonald1995Symmetric}. 
We refer the reader to~\cite{Macdonald1995Symmetric, Stanley1999Enumerative} for undefined terminology from the theory of symmetric functions.
Given two  graded virtual representations $V_1\in \grVRep(S_{n_1})$ and $V_2\in \grVRep(S_{n_2})$, we have
$$\ch \left(\Ind_{S_{n_1}\times S_{n_2}}^{S_{n_1+n_2}}V_1\otimes V_2\right) = \ch(V_1)\ch(V_2).$$ 
The image of the irreducible representation  $V_{\lambda}$ under  $\ch$  is the Schur function $s_{\lambda}$ and, in particular, the image of the trivial representation $V_{(n)}$ is the  complete symmetric function $h_n(\mathbf{x})$. 
Define  $Q_n(\mathbf{x};t)$ as 
\begin{align}\label{def-Q}
Q_n(\mathbf{x};t) := 
\begin{cases}
0,& n=0 \ \mbox{or} \ 1,\\
\sum_{i=0}^{\lfloor \frac{n}{2} \rfloor -1} s_{n-2i,2^i}(\mathbf{x}) t^{i} , & n\ge 2.
\end{cases}
\end{align}
When $n\ge 2$, $Q_n(\mathbf{x};t)$ is  the image under the Frobenius map of  $P_{U_{1,n-1}}^{S_n}(t)$, see~\cite{proudfoot2016intersection}. 
Since the  Frobenius characteristic map   is an  isomorphism between $\grVRep(S_n)$  and $\Lambda_n\otimes_{\mathbb{Z}}\mathbb{Z}[t]$, the following theorem is equivalent to Theorem~\ref{thm-rep}.
\begin{thm}\label{thm-pq}
For any positive  integer $n$, we have
\begin{align}\label{eq-new-Gedeon}
P_n(\mathbf{x}; t) = h_{n}(\mathbf{x})+t \sum_{k=2}^{n} h_{n-k}(\mathbf{x})\, Q_k(\mathbf{x};t).
\end{align}
\end{thm}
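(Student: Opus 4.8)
The plan is to pass to generating functions in an auxiliary variable $y$. Put
\[
\mathcal{P}(y):=\sum_{n\ge 0}P_n(\mathbf{x};t)\,y^n,\quad
H(y):=\sum_{n\ge 0}h_n(\mathbf{x})\,y^n=\prod_{i\ge 1}(1-x_iy)^{-1},\quad
\mathcal{Q}(y):=\sum_{k\ge 2}Q_k(\mathbf{x};t)\,y^k .
\]
The double sum on the right of \eqref{eq-new-Gedeon} is a Cauchy product, so $\sum_{k=2}^{n}h_{n-k}Q_k$ is the coefficient of $y^n$ in $H(y)\mathcal{Q}(y)$. Because $h_0=1$, $Q_0=Q_1=0$, and $P_0(\mathbf{x};t)=h_0=1$, summing \eqref{eq-new-Gedeon} over $n\ge 0$ shows that the theorem is equivalent to the single functional identity
\[
\mathcal{P}(y)=H(y)\bigl(1+t\,\mathcal{Q}(y)\bigr).
\]
It therefore suffices to compute the two sides in closed form and compare.

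For the uniform side, writing each shape $(k-2i,2^i)$ from \eqref{def-Q} as a first part $a:=k-2i\ge 2$ followed by $i$ copies of $2$ gives
\[
\mathcal{Q}(y)=\sum_{a\ge 2}\sum_{i\ge 0}s_{(a,2^{\,i})}(\mathbf{x})\,y^{a}\,(t\,y^2)^{i},
\]
and I would collapse the inner Schur series via the Jacobi--Trudi determinant for these near-rectangular shapes, recovering the known closed form for the equivariant Kazhdan--Lusztig series of the matroids $U_{1,k-1}$ \cite{gedeon2017equivariant, proudfoot2016intersection}. For the thagomizer side, the input is the defining equivariant recursion of~\cite{gedeon2017equivariant} for $M_n$ (of rank $n+1$): $t^{\,n+1}\overline{P_n(\mathbf{x};t^{-1})}$ equals a sum over $S_n$-orbits of flats $F$ of the Frobenius image of $\Ind_{(S_n)_F}^{S_n}$ applied to the equivariant characteristic polynomial of the contraction $M_n/F$ times $P^{(S_n)_F}_{(M_n)_F}$. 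The flat lattice of $M_n$ is explicit, and every localization and contraction splits as a direct sum of Boolean, uniform, and smaller thagomizer matroids; under $\ch$ the inductions become products, trivial representations become the $h_j$, and the uniform blocks become the $Q_k$, so summing over $n$ yields a functional equation for $\mathcal{P}(y)$ in $H(y)$ and $\mathcal{Q}(y)$.

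Finally I would verify $\mathcal{P}(y)=H(y)\bigl(1+t\,\mathcal{Q}(y)\bigr)$. The cleanest logical route is via uniqueness: the equivariant Kazhdan--Lusztig polynomial is the unique solution of the recursion subject to $\deg P_n<(n+1)/2$, and the candidate $R_n:=h_n+t\sum_{k=2}^{n}h_{n-k}Q_k$ already obeys this bound, since $\deg\!\bigl(t\,h_{n-k}Q_k\bigr)=\lfloor k/2\rfloor\le\lfloor n/2\rfloor<(n+1)/2$; it then remains to check that $H(y)\bigl(1+t\,\mathcal{Q}(y)\bigr)$ satisfies the same functional equation, using the equivariant recursion for the uniform matroids that $\mathcal{Q}(y)$ encodes to evaluate the orbit sum. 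The main obstacle is precisely this bookkeeping on the thagomizer side: enumerating the flat orbits, identifying each minor together with its stabilizer and equivariant characteristic polynomial, and then collapsing the orbit sum into the compact plethystic identity above. Once that identity is established, applying $\ch^{-1}$ recovers \eqref{eq-new} and hence Theorem~\ref{thm-rep}.
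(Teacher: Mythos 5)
Your skeleton is the same as the paper's: appeal to the uniqueness of $P_n(\mathbf{x};t)$ as the solution of Gedeon's recursion subject to the degree bound, check that the candidate $R_n:=h_n+t\sum_{k=2}^{n}h_{n-k}Q_k$ satisfies $\deg R_n=\lfloor n/2\rfloor<(n+1)/2$ (your computation here is correct), and reduce the remaining condition to a generating-function identity. But the proposal stops exactly where the proof has to begin: you never verify that $H(y)\bigl(1+t\,\mathcal{Q}(y)\bigr)$ satisfies the thagomizer functional equation, and you yourself flag this verification as ``the main obstacle.'' That is the genuine gap, and it is the entire content of the paper's Lemma~\ref{lem-rho}. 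Moreover, the route you sketch for filling it --- re-deriving the thagomizer recursion by enumerating $S_n$-orbits of flats of $M_n$ and identifying every localization, contraction, and stabilizer --- is unnecessary work: Gedeon already proved the recursion in generating-function form (Proposition 4.7 of the thagomizer paper, equation \eqref{eq-Gedeon} here), and the paper simply cites it. Your other suggestion, collapsing $\mathcal{Q}(y)$ into a ``closed form'' by Jacobi--Trudi, is a dead end: no closed form is needed or available; what is needed is how $\mathcal{Q}$ transforms under the involution $(t,u)\mapsto(t^{-1},tu)$, and that is not a formal consequence of the Schur expansion of $Q_k$.

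Concretely, two computations close the gap, and they constitute the paper's proof. First, a plethystic simplification (Lemma~\ref{lem-phi}): from $h_n[E+F]=\sum_k h_k[E]\,h_{n-k}[F]$ one gets $H(tu)=v(t,u)H(u)^2$, so \eqref{eq-Gedeon} becomes $\phi(t^{-1},tu)=(t-1)u\,\frac{H(tu)}{H(u)}+\frac{H(tu)^2}{H(u)^2}\,\phi(t,u)$. Second --- and this is the essential imported ingredient your plan lacks --- the transformation law for the uniform-matroid series: setting $\psi(t,u)=\sum_{n\ge 2}Q_n(\mathbf{x};t)u^{n-1}$ (so that $\mathcal{Q}(y)=y\,\psi(t,y)$), extracting the coefficient of $x$ in Equation (4) of Gedeon--Proudfoot--Young yields $\psi(t^{-1},tu)=-\frac{1}{u}+\frac{t-1}{tuH(u)}+\frac{H(tu)}{H(u)}\left(\psi(t,u)+\frac{1}{tu}\right)$, which is a theorem about the equivariant Kazhdan--Lusztig polynomials of $U_{1,k-1}$, not bookkeeping. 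Once one has this, writing $\rho(t,u)=uH(u)\bigl(1+tu\,\psi(t,u)\bigr)$ and substituting gives in two lines that $\rho$ satisfies the same equation \eqref{eq-Gedeon-new} as $\phi$, and uniqueness finishes the proof. Without the transformation law for $\psi$ (or an equivalent substitute), your argument cannot be completed as written.
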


The rest of this section is dedicated to the proof of  Theorem~\ref{thm-pq}.
It is known from~\cite{gedeon2016thagomizer} that the polynomial $P_n(\mathbf{x}; t)$ is uniquely determined by the following three conditions:
\begin{itemize}
	\item[(i)] $P_0(\mathbf{x}; t)$=1,
	\item[(ii)] the degree of $P_n(\mathbf{x}; t)$  is less than $(n+1)/2$ for any positive integer $n$,  and 
	\item[(iii)]  for any positive integer $n$ the polynomial $P_n(\mathbf{x}; t)$ satisfies that 
	\begin{align*}
	t^{n+1}P_n(\mathbf{x}; t^{-1}) = &  (t-1) \sum_{\ell =0 }^{n} h_{\ell}[(t-2)X] h_{n-\ell}(\mathbf{x}) \\
	& \ + \sum_{i+j+m=n} P_i(\mathbf{x}; t) h_j[(t-1)X]  h_m[(t-1)X],
	\end{align*}
	where the square bracket  denotes the plethystic substitution~\cite{Haglund2008qt,Haiman2007Geometry} and it is a convention that $X = x_1 + x_2 + \cdots$. 
\end{itemize}
The third condition can also be expressed in terms of  its generating function.
Let  $$\phi(t,u) := \sum_{n=0}^{\infty} P_n(\mathbf{x}; t) u^{n+1}.$$
It is known by Gedeon~\cite[Proposition 4.7]{gedeon2016thagomizer}  that
the condition (iii) is equivalent to say that the function $\phi(t,u)$ satisfies
\begin{align}\label{eq-Gedeon}
\phi(t^{-1},tu) = (t-1)uH(u)v(t,u) + \frac{H(tu)^2}{H(u)^2} \phi(t,u),
\end{align}
	where
\begin{align*}
v(t,u)= \sum_{n=0}^{\infty} h_n[(t-2)X] u^n
\quad  \mbox{(\cite[p. 8]{gedeon2016survey})}.
\end{align*}
We note that the equation~\eqref{eq-Gedeon}  can be simplified as follows.
\begin{lem}\label{lem-phi}
	The function $\phi(t,u)$ satisfies 
	\begin{align}\label{eq-Gedeon-new}
	\phi(t^{-1},tu) = (t-1)u\frac{H(tu)}{H(u)}  +\frac{H(tu)^2}{H(u)^2} \phi(t,u),  
	\end{align}
\end{lem}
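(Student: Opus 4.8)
The plan is to observe that the desired identity \eqref{eq-Gedeon-new} differs from Gedeon's identity \eqref{eq-Gedeon} only in the \emph{first} summand on the right-hand side, since the term $\frac{H(tu)^2}{H(u)^2}\phi(t,u)$ is literally the same in both. Hence it suffices to prove the single plethystic identity
\[
H(u)\,v(t,u) = \frac{H(tu)}{H(u)},
\]
and then substitute it into the first term $(t-1)u\,H(u)\,v(t,u)$ of \eqref{eq-Gedeon}.

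To establish this identity I would pass to power sums. Recall $H(u)=\exp\left(\sum_{k\ge 1}\frac{p_k}{k}u^k\right)$, where $p_k=p_k(\mathbf{x})$ is the $k$-th power sum, so that $H(tu)=\exp\left(\sum_{k\ge 1}\frac{p_k}{k}(tu)^k\right)$. Treating $t$ as a single variable (so that $p_k[t]=t^k$) and the integer $2$ additively (so that $p_k[2X]=2p_k$), the plethystic rules yield $p_k[(t-2)X]=(t^k-2)p_k$. Consequently the generating function $v(t,u)=\sum_{n\ge 0}h_n[(t-2)X]u^n$ satisfies
\[
v(t,u)=\exp\left(\sum_{k\ge 1}\frac{(t^k-2)p_k}{k}u^k\right)
=\exp\left(\sum_{k\ge 1}\frac{p_k}{k}(tu)^k\right)\exp\left(-2\sum_{k\ge 1}\frac{p_k}{k}u^k\right)
=\frac{H(tu)}{H(u)^2}.
\]
Equivalently, and without power sums, one may invoke the plethystic addition rule $h_n[A+B]=\sum_j h_j[A]h_{n-j}[B]$ with $A=tX$ and $B=-2X$: the $A$-factor is $\sum_j h_j[tX]u^j=\sum_j t^j h_j u^j=H(tu)$, while the $B$-factor is $\sum_m h_m[-2X]u^m=H(u)^{-2}$ by the multiplicativity $\sum_m h_m[cX]u^m=H(u)^c$. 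Either route gives $v(t,u)=H(tu)/H(u)^2$, and hence $H(u)\,v(t,u)=H(tu)/H(u)$.

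Finally, multiplying by $(t-1)u$ converts the first term of the right-hand side of \eqref{eq-Gedeon} into $(t-1)u\,\frac{H(tu)}{H(u)}$, which is precisely the first term of \eqref{eq-Gedeon-new}; the second term is carried over verbatim. This completes the proof. The only delicate point, and the main obstacle, is pinning down the plethystic conventions correctly, namely verifying $p_k[(t-2)X]=(t^k-2)p_k$ so that the $t$-part assembles into $H(tu)$ and the $-2$ part into $H(u)^{-2}$; once this is in place the remainder is a direct substitution.
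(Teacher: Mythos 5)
Your proposal is correct, and it follows essentially the same route as the paper: both proofs reduce the lemma to the single plethystic identity $v(t,u)=H(tu)/H(u)^2$ (equivalently $H(u)\,v(t,u)=H(tu)/H(u)$) and then substitute it into the first term of Gedeon's equation \eqref{eq-Gedeon}, the second term being unchanged. The only difference is in how that identity is verified. The paper applies the addition formula $h_n[E+F]=\sum_{k}h_k[E]\,h_{n-k}[F]$ to the decompositions $2X=X+X$ and $tX=(t-2)X+2X$, so that every alphabet involved is ``positive'' and nothing beyond the cited Theorem 1.27 of Haglund is needed; you instead pass to power sums via $p_k[(t-2)X]=(t^k-2)p_k$ and the exponential formula (or, in your alternative route, use the negative alphabet $-2X$ together with the convention $\sum_m h_m[-2X]u^m=H(u)^{-2}$). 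Both verifications are valid and equally standard; the paper's choice of writing $tX=(t-2)X+2X$ rather than $(t-2)X=tX-2X$ is exactly what lets it sidestep plethystic negation, while your power-sum computation makes the mechanism ($t$ acting as a variable, integers acting additively) more transparent.
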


\begin{proof}
	It suffices to show that $$v(t,u)= \frac{H(tu)}{H(u)^2}.$$ 
	By the formula~\cite[Theorem 1.27]{Haglund2008qt} 
	\begin{align}\label{hef}
	h_n[E+F]=\sum_{k=0}^{n}{h_k[E]h_{n-k}[F]}, 
	\end{align}
	where $E=E(t_1,t_2,\ldots)$ and $F=F(w_1,w_2,\ldots)$ are two formal series of rational terms in their indeterminates, we have   $$h_n[2X]=\sum_{k=0}^{n}{h_k[X]h_{n-k}[X]}  \mbox{~~and~~}h[tX]=\sum_{k=0}^{n}{h_k[(t-2)X]h_{n-k}[2X]}.$$
	Note that $h_n[X]=h_n(\mathbf{x})$.
	Hence, it follows that 
	$$\sum_{n=0}^{\infty}{h_n[2X]u^n}=\left(\sum_{n=0}^{\infty}{h_n[X]u^n}\right)^2=H(u)^2,$$
	and thus
	$$\sum_{n=0}^{\infty}{h_n[tX]u^n}=\left(\sum_{n=0}^{\infty}{h_n[(t-2)X]u^n}\right) \left(\sum_{n=0}^{\infty}{h_n[2X]u^n}\right)=v(t,u)H(u)^2.$$
	By the definition of plethysm, we have $H(tu)=\sum_{n=0}^{\infty}{h_n[tX]u^n}$. Thus $H(tu)=v(t,u)H(u)^2$ as desired.
	%
	This completes the proof.
\end{proof}

In order to prove  Theorem \ref{thm-pq},  we shall prove that for every positive integer $n$ the polynomial on the right hand side of~\eqref{eq-new-Gedeon} also 
 satisfies the three conditions (i), (ii), and (iii$^{\prime}$).
For convenience, we define  $R_n(\mathbf{x};t)$ as 
\begin{align}\label{def-R}
R_n(\mathbf{x};t) := 
\begin{cases}
1,& n=0,\\
h_{n}(\mathbf{x})+t \sum_{k=2}^{n} h_{n-k}(\mathbf{x})\, Q_k(\mathbf{x};t), & n\ge 1.
\end{cases}
\end{align}
By \eqref{def-R}, we know  $R_0(\mathbf{x}; t)=1$ and  the degree of $R_n(\mathbf{x}; t)$ is $\lfloor \frac{n}{2} \rfloor$. Hence $R_n(\mathbf{x}; t)$ satisfies the first two  conditions.
For the condition (iii$^{\prime}$), 
let us  consider the generating function of $R_n(\mathbf{x}; t)$. Denote $$\rho(t,u):= \sum_{n=0}^{\infty} R_n(\mathbf{x}; t)u^{n+1}$$ and  $$H(u):= \sum_{n=0}^{\infty} h_{n}(\mathbf{x}) u^n.$$
We have the following result.
\begin{lem} \label{lem-rho}
The function $\rho(t,u)$ satisfies 
$$\rho(t^{-1},tu)  =(t-1)  u\frac{H(tu)}{H(u)}+  \frac{H(tu)^2}{H(u)^2} \rho(t,u).$$
\end{lem}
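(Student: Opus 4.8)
The plan is to reduce the claimed functional equation for $\rho$ to a single, more primitive identity involving only the generating function of the uniform matroids, and then to establish that identity.

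First I would package the uniform data into one series. Set $\Phi(t,u):=\sum_{k\ge 0}Q_k(\mathbf{x};t)\,u^k$, so that $\Phi(t,u)$ is the Frobenius image of the generating function of the polynomials $P^{S_k}_{U_{1,k-1}}(t)$. Because $Q_0=Q_1=0$ and $h_0(\mathbf{x})=1$, the inner sum in the definition~\eqref{def-R} of $R_n(\mathbf{x};t)$ may be taken over $0\le k\le n$ without change, so that $R_n(\mathbf{x};t)=h_n(\mathbf{x})+t\,[u^n]\!\left(H(u)\Phi(t,u)\right)$ for every $n\ge 0$, where $[u^n]$ denotes coefficient extraction. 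Summing against $u^{n+1}$ and recognising the Cauchy product turns this into the compact closed form
\[
\rho(t,u)=u\,H(u)\bigl(1+t\,\Phi(t,u)\bigr).
\]

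Next I would substitute this closed form into both sides of the assertion. On the left, $\rho(t^{-1},tu)=u\,H(tu)\bigl(t+\Phi(t^{-1},tu)\bigr)$, while the second term on the right becomes $\frac{H(tu)^2}{H(u)^2}\rho(t,u)=u\,\frac{H(tu)^2}{H(u)}\bigl(1+t\,\Phi(t,u)\bigr)$. Every term now carries the common factor $u\,H(tu)$, which is a unit in the relevant power series ring; cancelling it and clearing the remaining denominator $H(u)$ shows that Lemma~\ref{lem-rho} is equivalent to the single identity
\[
H(u)\bigl(t+\Phi(t^{-1},tu)\bigr)=(t-1)+H(tu)\bigl(1+t\,\Phi(t,u)\bigr),
\]
or, writing $\Psi:=1+t\Phi$, to the cleaner form $t\,H(u)\,\Psi(t^{-1},tu)=(t-1)+H(tu)\,\Psi(t,u)$. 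This is a statement about uniform matroids alone: it is the functional equation governing their equivariant Kazhdan--Lusztig generating function, the analogue for $U_{1,k-1}$ of~\eqref{eq-Gedeon-new}.

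The crux --- and the step I expect to be the main obstacle --- is proving this last identity, that is, understanding the effect of the substitution $t\mapsto t^{-1}$, $u\mapsto tu$ on $\Phi$. Extracting the coefficient of $u^n$ reduces it to the family of symmetric-function identities $\sum_{k=2}^{n}h_{n-k}(\mathbf{x})\bigl(t^{k}Q_k(\mathbf{x};t^{-1})-t^{\,n-k+1}Q_k(\mathbf{x};t)\bigr)=h_n(\mathbf{x})\,(t^{n}-t)$, in which one checks that all contributions cancel except the extreme $t$-powers of the $k=n$ term, $s_{(n)}=h_n$; so the real content is to control the ``$t\mapsto t^{-1}$'' reversal of each $Q_k$. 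I would obtain this either by invoking the known description of the equivariant Kazhdan--Lusztig polynomials of uniform matroids recorded after~\eqref{def-Q} (from~\cite{gedeon2017equivariant,proudfoot2016intersection}), or, self-containedly, by using that the shapes $(k-2i,2^i)$ are near-hooks: their Schur functions have a short Jacobi--Trudi expansion in which only the first row carries the index $k-2i$, so summing that row against a power of $u$ linearises the dependence and yields a closed rational expression for $\Phi(t,u)$ in shifted copies of $H$, after which the identity is a direct verification (the cancellation of spurious terms already visible at low order in $u$ confirms the bookkeeping). Once the uniform functional equation is in hand, Lemma~\ref{lem-rho} follows at once; comparison with Lemma~\ref{lem-phi} then exhibits $\rho$ and $\phi$ as solutions of the same functional equation, which is exactly what the uniqueness argument in the remainder of the section exploits.
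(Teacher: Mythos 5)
Your proposal is correct and is essentially the paper's own proof: your closed form $\rho(t,u)=uH(u)\bigl(1+t\,\Phi(t,u)\bigr)$ is exactly \eqref{eq-rho-psi} (with $\Phi(t,u)=u\,\psi(t,u)$), and the identity you reduce the lemma to, $H(u)\bigl(t+\Phi(t^{-1},tu)\bigr)=(t-1)+H(tu)\bigl(1+t\,\Phi(t,u)\bigr)$, is precisely the functional equation for uniform matroids that the paper imports by taking the coefficient of $x$ in \cite[Equation (4)]{gedeon2017equivariant}. The only difference is direction of presentation---the paper solves that known equation for $\psi(t^{-1},tu)$ and substitutes it into $\rho(t^{-1},tu)$, whereas you reduce the target equation to it and then cite it---so your primary (citation) route coincides with the paper's argument, and the self-contained Jacobi--Trudi verification you sketch is not needed.
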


\begin{proof}
Let $\psi(t,u)=\sum_{n=2}^{\infty}Q_n(\mathbf{x}; t) u^{n-1}.$
Since  $Q_{0}(\mathbf{x};t)=Q_{1}(\mathbf{x}; t)=0$, it follows from \eqref{def-R} that 
\begin{align}
\rho(t,u) & = u H(u) +tu^{2}H(u) \psi(t, u) \notag \\[6pt]
& = u H(u) \left( 1+tu \, \psi(t, u) \right) .  \label{eq-rho-psi}
\end{align}
Hence $\rho(t^{-1},tu)$ turns outs to be
\begin{align}\label{rho_tu}
\rho(t^{-1},tu) & = tu H(tu) \left( 1+u \, \psi(t^{-1} ,tu) \right).
\end{align}
On the other hand, taking the coefficient of $x$ in \cite[Equation (4)]{gedeon2017equivariant}, the function $\psi(t, u)$ satisfies
\begin{align*}
\left(\frac{1}{u}+ \psi(t^{-1},tu)  \right) H(u) -\left(\frac{1}{u}+h_{1}(\mathbf{x}) \right)
=\left(\frac{1}{tu}+ \psi(t,u) \right) H(tu) - \left(\frac{1}{tu}+h_{1}(\mathbf{x}) \right).
\end{align*}
Hence, we have that the function $\psi(t,u)$ satisfies the following equation
\begin{align*} 
\psi(t^{-1},tu) &  =  -\frac{1}{u} +\frac{t-1}{tu H(u)}  + \frac{H(tu)}{H(u)}  \left(  \psi(t,u) + \frac{1}{tu} \right),
\end{align*}
and  thus it follows from~\eqref{eq-rho-psi} that 
\begin{align}\label{eq-psi}
\psi(t^{-1},tu) &   =   -\frac{1}{u} +\frac{t-1}{tu H(u)} + \frac{H(tu)}{tu^2H(u)}  \rho(t,u).
\end{align}
Substituting \eqref{eq-psi} into the right hand side of~\eqref{rho_tu}, we have that 
the function $\rho(t,u)$ satisfies that
\begin{align*}
\rho(t^{-1},tu) & =  tu H(tu)+tu^2 H(tu) \left(  -\frac{1}{u} +\frac{t-1}{tu H(u)} + \frac{H(tu)}{tu^2H(u)}  \rho(t,u)  \right) \\[6pt]
& =  (t-1)u \frac{H(tu)}{H(u)}+  \frac{H(tu)^2}{H(u)^2} \rho(t,u).
\end{align*}
This completes the proof.
\end{proof}

We are in the position to prove Theorem~\ref{thm-pq}.
\begin{proof}[Proof of Theorem~\ref{thm-pq}]
As shown previously, the polynomial $R_n(\mathbf{x}; t)$ satisfies the first two  conditions (i) and (ii).
By Lemma~\ref{lem-phi}, The condition (iii) is equivalent to the generating function $\phi(t,u)$ of $P_n(\mathbf{x}; t)$ satisfies \eqref{eq-Gedeon-new}. Compared with  Lemma~\ref{lem-rho},   the generating function $\psi(t,u)$ of $R_n(\mathbf{x}; t)$ satisfies the same function.
Thus, we obtain that the  condition (iii) is true for $R_n(\mathbf{x}; t)$ as well. 
Since these three conditions uniquely determines a polynomial sequence, we get that 
$P_n(\mathbf{x}; t)= R_n(\mathbf{x}; t)$ for every positive integer $n$. 
This completes the proof of Theorem~\ref{thm-pq}.
\end{proof}

\section{Proof of Conjecture \ref{conj-main}}
In this section, we shall prove the following theorem which is equivalent to Conjecture~\ref{conj-main} in the sense of  Frobenius  map.
Our proof is based on the Pieri rule. 
\begin{thm}\label{thm-sym}
For any positive integer $n$, we have 
\begin{align}\label{eq-conj-sym}
P_n(\mathbf{x}; t) = \sum_{\lambda \in \Upsilon_n} \kappa(\lambda) s_{\lambda}(\mathbf{x}) t^{\ell(\lambda)-1} (t+1)^{\omega(\lambda)} + h_{n} \left( \mathbf{x}) ((n-1) t+1 \right).
\end{align}
\end{thm}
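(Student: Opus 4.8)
The plan is to start from the expansion in Theorem~\ref{thm-pq}, insert the explicit form \eqref{def-Q} of $Q_k(\mathbf{x};t)$, and expand each product $h_{n-k}(\mathbf{x})\,s_{(k-2i,2^i)}(\mathbf{x})$ by the Pieri rule $h_m\,s_\mu=\sum_\nu s_\nu$, the sum being over all $\nu$ with $|\nu|=|\mu|+m$ for which $\nu/\mu$ is a horizontal strip, i.e.\ $\nu_1\ge\mu_1\ge\nu_2\ge\mu_2\ge\cdots$. Interchanging the order of summation, the problem reduces to evaluating, for each partition $\lambda\vdash n$, the polynomial
\begin{align*}
c_\lambda(t):=\sum_{(k,i)}t^{\,i},
\end{align*}
where $(k,i)$ ranges over all pairs with $2\le k\le n$ and $0\le i\le\floor{k/2}-1$ such that $\lambda/(k-2i,2^i)$ is a horizontal strip of size $n-k$; the coefficient of $s_\lambda(\mathbf{x})$ in $P_n(\mathbf{x};t)$ is then $t\,c_\lambda(t)$ for $\lambda\ne(n)$, and $1+t\,c_{(n)}(t)$ for $\lambda=(n)$. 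Thus it suffices to prove $t\,c_\lambda(t)=\kappa(\lambda)\,t^{\ell(\lambda)-1}(t+1)^{\omega(\lambda)}$ for $\lambda\in\Upsilon_n$, together with $c_\lambda(t)=0$ for $\lambda\notin\Upsilon_n\cup\{(n)\}$.

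The heart of the argument is a direct reading of the interlacing conditions. Write $\lambda=(a,b,2^{p},\eta)$ with $a=\lambda_1$, $b=\lambda_2$, $p$ equal to the number of parts equal to $2$, and $\eta\in\{0,1\}$ recording a possible trailing part $1$, and assume first that $b\ge 2$. Containment $(k-2i,2^i)\subseteq\lambda$ forces $\lambda_{i+1}\ge 2$, hence $i\le p+1$, while the horizontal-strip condition forces $\lambda_j=0$ for all $j\ge i+3$, hence $i\ge\ell(\lambda)-2$. Since $\ell(\lambda)=p+2+\eta$, these two bounds pin the admissible shape index to $i\in\{p,p+1\}$ when $\eta=0$ and to $i=p+1$ when $\eta=1$; this dichotomy is exactly what creates the factor $(t+1)^{\omega(\lambda)}$, two consecutive values contributing $t^{p}+t^{p+1}$ and a single value contributing $t^{p+1}$. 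For each admissible $i$ the remaining inequalities $\lambda_1\ge k-2i\ge\lambda_2$ confine $k$ to an interval of precisely $\lambda_1-\lambda_2+1$ integers, and when $b\ge 2$ the side constraints $k\ge 2$, $k\le n$ and $k-2i\ge 2$ hold automatically. Collecting the contributions and using $\omega(\lambda)=1-\eta$ yields $t\,c_\lambda(t)=(\lambda_1-\lambda_2+1)\,t^{\ell(\lambda)-1}(t+1)^{\omega(\lambda)}$, which is the claimed coefficient for every $\lambda\in\Upsilon_n$ having $\lambda_2\ge 2$, since there $\kappa(\lambda)=\lambda_1-\lambda_2+1$.

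It remains to handle the boundary partitions, which I expect to be the main obstacle. For $\lambda=(n)$ only $i=0$ can occur, and it does so for every $k$ with $2\le k\le n$, whence $c_{(n)}(t)=n-1$ and the coefficient of $h_n=s_{(n)}$ is $1+(n-1)t$, accounting for the term $h_n((n-1)t+1)$. The genuinely delicate case is $\lambda=(n-1,1)$, where $b=\lambda_2=1$: now any shape $(k-2i,2^i)$ with $i\ge 1$ has second part $2>1$ and cannot be contained in $\lambda$, so only $i=0$ survives, and the constraint $k\ge 2$ truncates the $k$-interval so that it contains $n-2$ rather than $\lambda_1-\lambda_2+1=n-1$ integers. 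Hence $t\,c_{(n-1,1)}(t)=(n-2)t$, which equals $\kappa(\lambda)\,t^{\ell(\lambda)-1}(t+1)^{\omega(\lambda)}$ precisely when the exceptional value $\kappa((n-1,1))=\lambda_1-1$ is used; this explains the special clause in the definition of $\kappa$. Finally I would verify that any $\lambda$ not of the above shape---for instance one with a part $\ge 3$ below the second row, or with more than one part equal to $1$, or with $\lambda_2=1$ and at least three rows---violates the interlacing for every $(k,i)$ and so has $c_\lambda(t)=0$. This identifies the support of the expansion as exactly $\Upsilon_n\cup\{(n)\}$ and completes the match with \eqref{eq-conj-sym}.
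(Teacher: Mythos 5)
Your proposal is correct and takes essentially the same route as the paper: starting from Theorem~\ref{thm-pq}, expanding the products $h_{n-k}(\mathbf{x})Q_k(\mathbf{x};t)$ by the Pieri rule, and matching the coefficient of each $s_\lambda(\mathbf{x})$, with the same case structure (generic $\lambda_2\ge 2$ producing the factor $(t+1)^{\omega(\lambda)}$ from the two admissible shape indices, and the boundary partitions $(n)$ and $(n-1,1)$ accounting for the terms $h_n((n-1)t+1)$ and the exceptional value of $\kappa$). The only difference is organizational: you interchange summation and count admissible pairs $(k,i)$ directly from the horizontal-strip interlacing conditions, whereas the paper first writes out the four explicit Pieri sums $B_n^{\langle 1\rangle},\dots,B_n^{\langle 4\rangle}$ and then runs the analogous three-case analysis.
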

\begin{proof}
By Theorem~\ref{thm-pq} we need to prove that $R_n(\mathbf{x}; t)$ is equal to the right side of \eqref{eq-conj-sym}, namely
\begin{align*}
\sum_{\lambda \in \Upsilon_n} \kappa(\lambda) s_{\lambda}(\mathbf{x}) t^{\ell(\lambda)-1} (t+1)^{\omega(\lambda)} + h_{n}(\mathbf{x}) ((n-1) t+1)=h_{n}(\mathbf{x})+t \sum_{k=2}^{n} h_{n-k}(\mathbf{x})\, Q_k(\mathbf{x};t).
\end{align*}
Recall that  
$Q_n(\mathbf{x}; t)=\sum_{i=0}^{\lfloor \frac{n}{2} \rfloor -1} s_{n-2i,2^i}(\mathbf{x}) t^{i}$ for $n\ge 2$. It suffices to prove that
\begin{align}
\sum_{\lambda \in \Upsilon_n} \kappa(\lambda) s_{\lambda}(\mathbf{x}) t^{\ell(\lambda)-1} (t+1)^{\omega(\lambda)} + h_{n}(\mathbf{x}) (n-1) t
= \sum_{k=2}^{n} h_{n-k}(\mathbf{x}) \sum_{i=0}^{\lfloor \frac{k}{2} \rfloor -1} s_{k-2i,2^i}(\mathbf{x}) t^{i+1}.\label{eqab}
\end{align}
For convenience, we denote by $A_n(\mathbf{x}; t)$ and $B_n(\mathbf{x}; t)$ the left side and the right side of~\eqref{eqab}, respectively.

We first show that  $B_n(\mathbf{x}; t)$  is of the form
\begin{align*}
\sum_{\lambda \in \Upsilon_n} s_{\lambda}(\mathbf{x}) a_{\lambda}(t)   + h_{n}(\mathbf{x})  a_{n}(t),
\end{align*}
where $a_{\lambda}(t)$ and $a_{n}(t)$ are polynomials of $t$ with nonnegative integer coefficients.
In fact, by the Pieri rule we have that for $2 \le k \le n$
\begin{align*}
 h_{n-k}(\mathbf{x}) h_{k}(\mathbf{x}) = \sum_{p=\max(0, n-2k)}^{n-k} s_{k+p,n-k-p}(\mathbf{x}) , 
\end{align*}
and for $2 \le k \le n$ and $1\le i \le \lfloor \frac{k}{2} \rfloor-1$
\allowdisplaybreaks
\begin{align*}
 h_{n-k}(\mathbf{x}) s_{k-2i,2^i}(\mathbf{x}) =  & 
  \sum_{p=\max(0, n-2k+2i+2)}^{n-k} s_{k+p-2i,n-k-p+2,2^{i-1}}(\mathbf{x})  \\[6pt]
  & \ + \sum_{p=\max(0,  n-2k+2i+1)}^{n-k-1} s_{k+p-2i,n-k-p+1,2^{i-1},1}(\mathbf{x})\\[6pt]
  & \ +\sum_{p=\max(0, n-2k+2i)}^{n-k-2} s_{k+p-2i,n-k-p,2^{i}}(\mathbf{x}).
\end{align*}
Set
\allowdisplaybreaks
\begin{align*}
B_n^{\langle 1 \rangle}(\mathbf{x}; t) & := \sum_{k=2}^{n} \sum_{p=\max(0,  n-2k)}^{n-k} s_{k+p,n-k-p}(\mathbf{x}) \, t, \\[5pt]
B_n^{\langle 2\rangle}(\mathbf{x}; t) & := \sum_{k=2}^{n} \sum_{i=1}^{\lfloor \frac{k}{2} \rfloor -1}   \sum_{p=\max(0, n-2k+2i+2)}^{n-k} s_{k+p-2i,n-k-p+2,2^{i-1}}(\mathbf{x}) \, t^{i+1}, \\[5pt]
B_n^{\langle 3 \rangle}(\mathbf{x}; t) & := \sum_{k=2}^{n-1} \sum_{i=1}^{\lfloor \frac{k}{2} \rfloor -1} \sum_{p=\max(0, n-2k+2i+1)}^{n-k-1} s_{k+p-2i,n-k-p+1,2^{i-1},1}(\mathbf{x}) \,  t^{i+1}, \\[5pt]
B_n^{\langle 4 \rangle}(\mathbf{x}; t) & := \sum_{k=2}^{n-2} \sum_{i=1}^{\floor{\frac{k}{2}} -1} \sum_{p=\max(0, n-2k+2i)}^{n-k-2} s_{k+p-2i,n-k-p,2^{i}}(\mathbf{x})   \, t^{i+1}.
\end{align*}
Hence,
\begin{align*}
B_n(\mathbf{x}; t)  =B_n^{\langle 1 \rangle}(\mathbf{x}; t)+ B_n^{\langle 2\rangle}(\mathbf{x}; t) + B_n^{\langle 3 \rangle}(\mathbf{x}; t) + B_n^{\langle 4 \rangle}(\mathbf{x}; t).
\end{align*}

We proceed to  prove that $a_{\lambda}(t)$ and $a_{n}(t)$  agree with the corresponding polynomials of $t$ appearing in $A_n(\mathbf{x}; t)$.
Since $h_{n}(\mathbf{x})$ can be obtained only from $B_n^{\langle 1 \rangle}(\mathbf{x}; t)$, where $k$ ranges from $2$ to $n$, we obtain that $a_{n}(t)=(n-1)t.$
We shall prove
$a_{\lambda}(t)=\kappa(\lambda) t^{\ell(\lambda)-1} (t+1)^{\omega(\lambda)}$
for $\lambda \in \Upsilon_n$. To this end, we divide the proof  into the  following three cases according to the definitions of $\kappa(\lambda)$ and $\omega(\lambda)$:

\noindent \textbf{Case 1: }   $\lambda=(n-1,1)$. 
In this case, $s_{n-1,1}(\mathbf{x})$ can be obtained only from  $B_n^{\langle 1 \rangle}(\mathbf{x}; t)$, where $k$ ranges from $2$ to $n-1$. Thus we have $a_{n-1,1}(t)=(n-2)t=\kappa(\lambda) t^{\ell(\lambda)-1} (t+1)^{\omega(\lambda)}$.

\noindent \textbf{Case 2: }  $\lambda_{\ell(\lambda)}=1$ and $\lambda\neq(n-1,1)$. 
In this case, $\lambda$ must be  of the  form $(\lambda_1, \lambda_2,2^{i-1},1)$, where $i=\ell(\lambda)-2 \ge 1$.   Hence, we get that $s_{\lambda}(\mathbf{x})$ can be obtained only from $B_n^{\langle 3 \rangle}(\mathbf{x}; t)$.
We next compute the coefficient of $s_{\lambda_1, \lambda_2,2^{i-1},1} (\mathbf{x})$ in   $B_n^{\langle 3 \rangle}(\mathbf{x}; t)$. 
From the betweenness condition of the Pirie rule, we know that 
$$\lambda_2\le k-2i \le \lambda_1,$$
and thus
$$2< \lambda_2+2i \le k\le \lambda_1+2i=n-\lambda_2+1\leq n.$$ When $i$ and $k$ is fixed, $p$ is uniquely determined, since $p=\lambda_1+2i-k$.
Since  $\lambda_1\geq \lambda_2\ge 2$ and $\lambda_1+\lambda_2=n-2i+1$, we have
$$\ceil{\frac{n+1}{2}}-i\leq \lambda_1\leq n-2i-1,$$
and thus
$$\ceil{\frac{n+1}{2}}-k+i\leq p=\lambda_1+2i-k\leq n-k-1.$$
Hence when $\lambda$ is fixed, 
 $k$ is bounded by  the following inequality $$\lambda_2+2i \le k\le \lambda_1+2i,$$
 and any possible integer $k$ in this interval makes an occurrence of $s_{\lambda}(\mathbf{x})$.
Since $\omega(\lambda)=0$ and $i=\ell(\lambda)-2$, we have  that 
$$a_{\lambda}(t)=(\lambda_1-\lambda_2+1)t^{i+1}=\kappa(\lambda) t^{\ell(\lambda)-1} (t+1)^{\omega(\lambda)}.$$

\noindent \textbf{Case 3: } $\lambda_{\ell(\lambda)}\neq1$.
In this case, we have that 
$\lambda$ is of the  form  $(\lambda_1, \lambda_2, 2^{j})$, where $j=\ell(\lambda)-2\ge 0$.

When $j=0$, $s_{\lambda}(\mathbf{x})$ can be  obtained only  from $B_n^{\langle 2 \rangle}(\mathbf{x}; t)$ and $B_n^{\langle 1 \rangle}(\mathbf{x}; t)$.
When $j\geq 1$, $s_{\lambda}(\mathbf{x})$ can be  obtained only  from $B_n^{\langle 2 \rangle}(\mathbf{x}; t)$ and $B_n^{\langle 4 \rangle}(\mathbf{x}; t)$.  
Note that when $s_{\lambda}(\mathbf{x})$ is  obtained  from  $B_n^{\langle 2 \rangle}(\mathbf{x}; t)$,  $i$ should be $j+1$ and thus $t^{i+1}$ will be $t^{j+2}$. Along similar lines with \textbf{Case 2},
we have that 
\begin{align*}
a_{\lambda}(t)
&=(\lambda_1-\lambda_2+1)\, t^{j+2}+(\lambda_1-\lambda_2+1)\, t^{j+1}\\[4pt]
&=(\lambda_1-\lambda_2+1)\, t^{\ell(\lambda)}+(\lambda_1-\lambda_2+1)\, t^{\ell(\lambda)-1}\\[4pt]
&=(\lambda_1-\lambda_2+1)\, t^{\ell(\lambda)-1}(t+1)\\[4pt]
&=\kappa(\lambda) \, t^{\ell(\lambda)-1} (t+1)^{\omega(\lambda)}.
\end{align*}
%
%

Therefore,  we have shown  that,  for each  partition $\lambda$ of $n$,   the coefficients of $s_{\lambda}$ in $A_n(\mathbf{x}; t)$ and $B_n(\mathbf{x}; t)$  are equal.  Thus  $A_n(\mathbf{x}; t) = B_n(\mathbf{x}; t)$, which completes the proof.
\end{proof}

\noindent{\bf Acknowledgements.} 
The authors would like to thank James Haglund for his helpful discussion about the plethysm of symmetric functions.
This work was supported by the National Science Foundation of China (No.  11701424, 11801447).


\end{document}